\def \k {\mathbbm{k}}
\numberwithin{equation}{section}
\numberwithin{table}{section}
\numberwithin{equation}{section}
\newtheorem{theorem}{Theorem}[section]
\newtheorem{proposition}[theorem]{Proposition}
\newtheorem{corollary}[theorem]{Corollary}
\newtheorem{remark}[theorem]{Remark}
\title{Harrison center and products of sums of powers}
\thanks{Supported by NSFC 11971181 and 12131015.}
\subjclass[2010]{11E76, 15A69}
\keywords{sum of powers, composition}
\author{Hua-Lin Huang, Lili Liao, Huajun Lu and Chi Zhang}
\address{School of Mathematical Sciences, Huaqiao University, Quanzhou 362021, China}
\email{hualin.huang@hqu.edu.cn}
\address{School of Mathematical Sciences, Huaqiao University, Quanzhou 362021, China}
\email{lili.liao@stu.hqu.edu.cn}
\address{School of Mathematical Sciences, Huaqiao University, Quanzhou 362021, China}
\email{huajun@hqu.edu.cn}
\address{Department of Mathematics, Northeastern University, Shenyang 110819, China}
\email{zhangchi@mail.neu.edu.cn}
\date{}                                           
\begin{document}

\begin{abstract}
This paper is mainly concerned with identities like \[ (x_1^d + x_2^d + \cdots + x_r^d) (y_1^d + y_2^d + \cdots y_n^d) = z_1^d + z_2^d + \cdots + z_n^d \] where $d>2,$ $x=(x_1, x_2, \dots, x_r)$ and $y=(y_1, y_2, \dots, y_n)$ are systems of indeterminates and each $z_k$ is a linear form in $y$ with coefficients in the rational function field $\k (x)$ over any field $\k$ of characteristic $0$ or greater than $d.$ These identities are higher degree analogue of the well-known composition formulas of sums of squares of Hurwitz, Radon and Pfister. We show that such composition identities of sums of powers of degree at least $3$ are trivial, i.e., if $d>2,$ then $r=1.$ Our proof is simple and elementary, in which the crux is Harrison's center theory of homogeneous polynomials.
\end{abstract}

\maketitle

\section{Introduction}

Throughout the paper, let $d \ge 3$ be an integer and $\k$ be a field  of characteristic $0$ or $>d.$

In 1898, Hurwitz \cite{H} proved the well-known ``$1$, $2$,  $4$, $8$ Theorem" which states that the only values of $n$ for which there is an identity of the type
\begin{equation} \label{sq}
 (x_1^2 + x_2^2 + \cdots + x_n^2) (y_1^2 + y_2^2 + \cdots + y_n^2) =  z_1^2 + z_2^2 + \cdots + z_n^2
 \end{equation}
 where the $z_k$ are bilinear forms in $x=(x_1, x_2, \dots, x_n)$ and $y=(y_1, y_2, \dots, y_n)$ are $n = 1,2,4,8.$ Twenty years later, Hurwitz \cite{H1} and Radon\cite{Rad} proved independently that there is an identity
 \begin{equation} \label{hr}
 (x_1^2 + x_2^2 + \cdots + x_r^2) (y_1^2 + y_2^2 + \cdots + y_n^2) =  z_1^2 + z_2^2 + \cdots + z_n^2
 \end{equation}
 satisfying the previous conditions if and only if $r \le \rho(n)$ where $\rho(n)$ is the Hurwitz-Radon function defined as $\rho(n)=8a+2^b$ if $n=2^{4a+b}n_0$ where $n_0$ is odd and $0 \le b \le 3.$ In the 1960s, Pfister \cite{p} showed that if one allows $z_k$ to be linear in $y$ with coefficients in $\k(x)=\k(x_1, x_2, \dots, x_r),$ then there is an identity of form \eqref{sq} if and only if $n=2^m$  is a power of $2.$ These beautiful square identities are nice motivation for the algebraic theory of quadratic forms, and have close connection to various topics in algebra, arithmetics, combinatorics, geometry and topology.

For a long time, people are wondering if there are similar identities of products of sums of powers in higher degrees. For example, in 1975 Nathanson \cite{n} considered the following identity
\begin{equation} \label{np}
(x_1^d + x_2^d + \cdots + x_n^d) (y_1^d + y_2^d + \cdots + y_n^d) =  z_1^d + z_2^d + \cdots + z_n^d
\end{equation}
requiring that the $z_k$ are polynomials in $x$ and $y$ with integer coefficients. He applied simple arithmetic technique to show that such identities cannot hold for $(n, d)=(3, 3)$ and $n=2, d>2.$ He also conjectured that there exist no identities like \eqref{np} for any $n>1$ and $d>2.$ In the 1980s, from his work on higher pythagoras numbers, Becker \cite{becker} proposed a much stronger conjecture than the one of Nathanson's: if the following identity
\begin{equation}\label{bc}
(x_1^d + x_2^d + \cdots + x_r^d) (y_1^d + y_2^d + \cdots + y_s^d) =  z_1^d + z_2^d + \cdots + z_n^d
\end{equation}
holds, where each $z_k \in \k(x,y),$ then $rs \le n.$ To the best of our knowledge, so far very little seems to be known about this conjecture, but Gude made some progress in his thesis \cite{gude}. See also Shapiro's book \cite[Chapter 16]{shapiro} for a brief summary of Gude's results.

In this paper, we are mainly concerned about identities of the following type
\begin{equation} \label{hhr}
(x_1^d + x_2^d + \cdots + x_r^d) (y_1^d + y_2^d + \cdots + y_n^d) =  z_1^d + z_2^d + \cdots + z_n^d
\end{equation}
where each $z_k$ is a linear form in $y$ with coefficients in $\k(x).$ These identities are higher degree analogue of the well-known composition formulas of sums of squares of Hurwitz, Radon and Pfister. We show that such composition identities of sums of higher degree powers are trivial, namely, if \eqref{hhr} holds for $d>2,$ then $r=1.$ In particular, it follows that there are no nontrivial higher degree $n$-power formulas like \eqref{np}. As a matter of fact, we confirm a stronger version of the aforementioned Nathanson's conjecture. Our proof is simple and elementary, in which the crux is Harrison's center theory of higher degree forms \cite{h1}. We remark that nonexistence of \eqref{hhr} for $r>1, d>2$ is one of Gude's main results and his proof involves some sophisticated methods such as $p$-adic analysis, Jordan's theorem, Hurwitz curves, Kodaira dimension, etc. By the same idea, we also prove that if $r>1$ and $(n,d) \ne (2,3),$ then there is no identity of type
\begin{equation} \label{223}
(x_1^d + x_2^d + \cdots + x_r^d) (y_1^d + y_2^d + \cdots + y_n^d) =  z_1^d + z_2^d + \cdots + z_{n+1}^d
\end{equation}
where each $z_k$ is a linear form in $y$ with coefficients in $\k(x).$ This confirms some special case, namely $r=n=2$ and $d>3,$ of Becker's conjecture.

The notion of centers of homogeneous polynomials was introduced in the 1970s by Harrison in his seminal work \cite{h1} of the algebraic theory of higher degree forms. The centers of homogeneous polynomials were rediscovered independently by several other authors \cite{cg, k, pr, r} and found various applications in commutative algebra, algebraic geometry, multilinear algebra and invariant theory, see e.g. \cite{f1, hlyz, hlyz2, hlyz3}. One major application of centers is to the problem of direct sum decomposition, or in plain words, separation of variables of polynomials. There is one-to-one correspondence between the direct sum decompositions of a homogeneous polynomial of degree $>2$ and complete sets of orthogonal idempotents of the center algebra of the polynomial. Moreover, the direct sum decomposition into indecomposable summands for a higher degree homogeneous polynomial is essentially unique. This is the crucial ingredient used in our argument.

The paper is organized as follows. In Section 2 we recall Harrison's theory of centers and its application to direct sum decompositions of homogeneous polynomials. Then we apply the center theory to prove our main results in Section 3.

\section{Harrison centers of homogeneous polynomials}

Let $f(x_1, x_2, \dots, x_n) \in \k[x_1, \dots, x_n]$ be a homogeneous polynomial, also called form, of degree $d.$ Harrison \cite{h1} defined the center of $f=f(x_1, x_2, \dots, x_n)$ as
$$Z(f):=\{ X \in \k^{n \times n} \mid HX=X^TH \},$$ where $H=(\frac{\partial^2 f}{\partial x_i \partial x_j})_{1 \le i,\ j \le n}$ is the hessian matrix.

A homogeneous polynomial $f$ is said to be nondegenerate, if no variable can be removed by an invertible linear change of variables. It is not hard to observe that $f$ is nondegenerate if and only if the first-order differentials $\frac{\partial f}{\partial x_i}$ are linearly independent. To avoid triviality, usually it is enough to consider nondegenerate homogeneous polynomials. A homogeneous polynomial $f(x_1, x_2, \dots, x_n)$ is called a direct sum if, after an invertible linear change of variables $y=Px,$ it can be written as a sum of $t \ge 2$ forms in disjoint sets of variables as follows
\begin{equation*} \label{eds}
f=f_1(y_1, \dots, y_{a_1}) + f_2(y_{a_1+1},
\dots, y_{a_2}) + \cdots + f_t(y_{a_{t-1}+1}, \dots, y_n).
\end{equation*}
If this is not the case, then $f$ is said to be indecomposable. On the other extreme, if the $f_i$'s are forms in only one variable, then $f$ is said to be diagonalizable.

Corresponding to the homogeneous polynomial $f(x_1, x_2, \dots, x_n),$ there is an associated symmetric $d$-linear space as follows. First write $f$ in the symmetric way
\begin{equation*} \label{et}
f(x_1, \dots, x_n) = \sum_{1 \le i_1, \cdots, i_d \le n} a_{i_1 \cdots i_d} x_{i_1} \cdots x_{i_d}
\end{equation*}
where the $a_{i_1 \cdots i_d}$'s are symmetric, i.e., they remain unchanged under any permutation of their subscripts. Then take a vector space $V$ over $\k$ of dimension $n$ with a basis $e_1, \dots, e_n$ and define a symmetric $d$-linear function $\Theta \colon V \times \cdots \times V \longrightarrow \k$ by $\Theta(e_{i_1}, \dots, e_{i_d})=a_{i_1 \cdots i_d}.$ The pair $(V, \Theta)$ is called the associated symmetric $d$-linear space of $f$ under the basis $e_1, \dots, e_n.$ One can recover the polynomial $f$ from $(V, \Theta)$ as \[ f(x_1, \dots, x_n) = \Theta\left(\sum_{1 \le i \le n}x_ie_i, \ \dots, \ \sum_{1 \le i \le n}x_i e_i\right).\] Clearly, a change of basis for $V$ results in a change of variables for $f.$
The center can also be equivalently defined in terms of symmetric $d$-linear space $(V,\Theta)$ as
 \begin{equation*}
Z(V,\Theta):=\{ \phi \in \operatorname{End}(V) \mid \Theta(\phi(v_1), v_2, \dots, v_d)=\Theta(v_1, \phi(v_2), \dots, v_d), \ \forall v_1, v_2, \dots, v_d \in V \}.
\end{equation*}

Nonzero subspaces $V_1, \dots, V_t$ of $(V,\Theta)$ of a symmetric $d$-linear space $(V,\Theta)$ are said to be orthogonal, if $\Theta(v_1, \dots, v_d)=0$ for all $v_1, \dots, v_d \in V_1 \cup \cdots \cup V_t$ unless all the $v_i$'s are in the same $V_s$ for some $1 \le s \le t.$ If $V=V_1 \oplus \cdots \oplus V_t$ for $t \ge 2$ nonzero orthogonal subspaces, then call $(V,\Theta)$ decomposable. Otherwise, call $(V,\Theta)$ indecomposable. Clearly, the orthogonal decompositions of $(V,\Theta)$ are in bijection with the direct sum decompositions of its associated homogeneous polynomial $f.$ Suppose $V=V_1 \oplus \cdots \oplus V_t$ is an orthogonal decomposition. Choose a basis $\epsilon_1, \dots, \epsilon_{a_1}$ for $V_1,$ a basis $\epsilon_{a_1+1}, \dots, \epsilon_{a_2}$ for $V_2,$ and so on and so forth. Assume $(e_1, \dots, e_n)=(\epsilon_1, \dots, \epsilon_n)P$ with $P \in \operatorname{GL}_n(\k).$ Take the change of variables $x=Py,$ then we have the direct sum decomposition for the polynomial $f=f_1(y_1, \dots, y_{a_1}) + f_2(y_{a_1+1}, \dots, y_{a_2}) + \cdots + f_t(y_{a_{t-1}+1}, \dots, y_n).$ In fact, $f_i$ is the associated polynomials of $(V_i, \Theta|_{V_i}).$ More precisely,
\begin{equation} \label{dy}
f_i=\Theta\left(\sum_{a_{i-1}+1 \le i \le a_i}y_i\epsilon_i, \ \dots, \ \sum_{a_{i-1}+1 \le i \le a_i} y_i\epsilon_i \right).
\end{equation}
Conversely, given a direct sum decomposition $f=f_1(y_1, \dots, y_{a_1}) + f_2(y_{a_1+1}, \dots, y_{a_2}) + \cdots + f_t(y_{a_{t-1}+1}, \dots, y_n)$ with change of variables $x=Py,$ let $(\epsilon_1, \dots, \epsilon_n)=(e_1, \dots, e_n)P.$ Then let $V_1$ be the subspace spanned by $\epsilon_1, \dots, \epsilon_{a_1},$ $V_2$ be the subspace spanned by $\epsilon_{a_1+1}, \dots, \epsilon_{a_2},$ etc. It is clear that $V=V_1 \oplus \cdots \oplus V_t$ is an orthogonal decomposition.
 
The following are some useful facts about centers and direct sum decompositions of homogeneous polynomials obtained in \cite{h1}, see also \cite{h2, hlyz, hlyz2, hlyz3}.

\begin{proposition} \label{pc}
Suppose $f \in \k[x_1, x_2, \dots, x_n]$ is nondegenerate and is of degree $d.$ Then
\begin{itemize}
\item[(1)] The center $Z(f)$ is a commutative subalgebra of the full matrix algebra $\k^{n \times n}.$
\item[(2)] If $f=f_1+f_2+\cdots+f_t$ is a direct sum decomposition, then $Z(f) \cong Z(f_1) \times Z(f_2) \times \cdots \times Z(f_t).$
\item[(3)] There is a one-to-one correspondence between direct sum decompositions of $f$ and complete sets of orthogonal idempotents of $Z(f).$
\item[(4)] If $f=g_1+g_2+\cdots+g_s$ and $f=h_1+h_2+\cdots+h_t$ are direct sum decompositions, where the $g_i$ and $h_j$ are indecomposable, then $s=t$ and after a reordering $g_i=h_i$ for all $1 \le i \le s.$
\item[(5)] $f$ is diagonalizable if and only if $Z(f) \cong \k \times \k \times \cdots \times \k \ ($n$ \ copies).$
\end{itemize}
\end{proposition}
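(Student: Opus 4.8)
The plan is to transfer everything to the associated symmetric $d$-linear space $(V,\Theta)$, where the manipulations are cleanest, and to lean on one elementary observation. Since $\operatorname{char}\k=0$ or $>d$, polarization is available, so nondegeneracy of $f$ is equivalent to the statement that a vector $w\in V$ with $\Theta(w,v_2,\dots,v_d)=0$ for all $v_2,\dots,v_d$ must vanish; I will use this repeatedly to pass from $d$-linear identities to identities of endomorphisms. The observation is a \emph{slot-independence} property: if $\phi\in Z(V,\Theta)$, then combining the defining relation $\Theta(\phi v_1,v_2,\dots,v_d)=\Theta(v_1,\phi v_2,\dots,v_d)$ with the full symmetry of $\Theta$ shows that the operator $\phi$ may be moved into whichever one of the $d$ arguments is convenient. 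Every part of the proposition is then a short computation built on this.

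For part (1), that $\Id\in Z(f)$ and that $Z(f)$ is a linear subspace of $\k^{n\times n}$ are immediate. To obtain closure under composition and commutativity, I would take $\phi,\psi\in Z(V,\Theta)$ and evaluate $\Theta(\phi\psi v_1,v_2,v_3,v_4,\dots,v_d)$ by first pushing the outer $\phi$ into the third argument and then $\psi$ into the second; the result is $\Theta(v_1,\psi v_2,\phi v_3,v_4,\dots,v_d)$, and the same computation starting from $\Theta(\psi\phi v_1,v_2,\dots,v_d)$ lands on the identical expression. (This is exactly where $d\ge3$ enters: three distinct arguments are needed.) Nondegeneracy then forces $\phi\psi=\psi\phi$, and feeding commutativity back into a two-argument shuffle shows that $\phi\psi$ again satisfies the defining relation; hence $Z(f)$ is a commutative subalgebra, in particular finite dimensional.

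Next I would set up the dictionary of part (3). Given an orthogonal decomposition $V=V_1\oplus\cdots\oplus V_t$, the projections $\pi_1,\dots,\pi_t$ are orthogonal idempotents summing to $\Id$, and orthogonality of the $V_i$ is exactly what puts each $\pi_i$ in $Z(f)$; conversely, from a complete set of orthogonal idempotents $e_1,\dots,e_t$ of $Z(f)$ the subspaces $V_i:=e_iV$ are $\Theta$-orthogonal (by slot-independence, since $e_ie_j=0$ for $i\ne j$) and satisfy $V=\bigoplus_iV_i$. These two constructions are mutually inverse, and composing with the correspondence between orthogonal decompositions of $(V,\Theta)$ and direct sum decompositions of $f$ gives (3). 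Part (2) follows: commutativity of $Z(f)$ yields $\phi\pi_i=\pi_i\phi$, so each $\phi\in Z(f)$ preserves every $V_i$, and $\phi\mapsto(\phi|_{V_1},\dots,\phi|_{V_t})$ is an algebra isomorphism $Z(f)\cong\prod_iZ(f_i)$; injectivity is clear, and surjectivity holds because a tuple of endomorphisms of the $V_i$, each in the respective center, glues to an element of $Z(f)$, all mixed values of $\Theta$ vanishing by orthogonality. For part (5), if $f$ is diagonalizable then $V$ splits orthogonally into lines, each one-variable summand has center $\k$, so $Z(f)\cong\k\times\cdots\times\k$ ($n$ copies); conversely, $Z(f)\cong\k\times\cdots\times\k$ supplies $n$ nonzero orthogonal idempotents, hence via (3) a decomposition of the $n$-dimensional space $V$ into $n$ nonzero subspaces, which must all be lines, so $f$ is diagonalizable.

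The substantive point is part (4), and this is where I expect the real work. First I would record the consequence of (2)--(3) that a summand $g_i$ attached to an idempotent $e_i$ is indecomposable precisely when $e_i$ is \emph{primitive} in $Z(f)$ (because $e_iZ(f)\cong Z(g_i)$, and indecomposability of $g_i$ amounts to $Z(g_i)$ having no idempotents besides $0$ and $\Id$). Thus both decompositions $f=g_1+\cdots+g_s$ and $f=h_1+\cdots+h_t$ into indecomposables correspond, under (3), to complete sets of \emph{primitive} orthogonal idempotents of the finite-dimensional commutative $\k$-algebra $Z(f)$. Then I would invoke the standard fact that such an algebra possesses a \emph{unique} complete set of primitive orthogonal idempotents --- they are the identities of its indecomposable ring factors, equivalently the atoms of its finite Boolean algebra of idempotents --- whence the two families coincide; so $s=t$, and after reindexing $g_i$ and $h_i$ are cut out via \eqref{dy} from the \emph{same} summand subspace $e_iV$, giving $g_i=h_i$. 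The main obstacle is really just assembling this last step cleanly: the genuine algebraic input is part (1) together with uniqueness of primitive idempotents, while everything else is the bookkeeping of translating freely between forms, orthogonal subspaces, and idempotents by means of the slot-independence identity.
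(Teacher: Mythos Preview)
Your proposal is correct and follows essentially the same approach as the paper: for the key item (4) you, like the paper, pass to complete sets of primitive orthogonal idempotents of the commutative finite-dimensional algebra $Z(f)$ and invoke their uniqueness to match the two decompositions. The paper only sketches (4) and defers (1)--(3), (5) to references, whereas you fill those in, but the argument is the same.
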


\begin{proof}
For the convenience of the reader, we include a sketchy proof for item (4) which is the key ingredient used to prove our main results. More details can be found in the aforementioned references. Let $(V, \Theta)$ be the associated symmetric $d$-linear space of $f.$ Suppose $f=g_1+g_2+\cdots+g_s=h_1+h_2+\cdots+h_t$ as in the proposition. Then there are two associated complete sets of primitive orthogonal idempotents in $Z(V, \Theta)$ corresponding to the given two direct sum decompositions of $f.$ However, the center algebra $Z(V, \Theta)$ is finite-dimensional and commutative, so it has only one unique complete set of primitive orthogonal idempotents \cite{hlyz3}. Thus for $V,$ there is a unique orthogonal decomposition $V=V_1 \oplus \cdots \oplus V_r$ where each $V_i$ is indecomposable. As \eqref{dy}, each $g_i$ or $h_j$ is the associated polynomial of some $(V_k, \Theta|_{V_k}).$ It follows directly that $s=t=r$ and after a reordering $g_i=h_i$ for all $1 \le i \le s.$
\end{proof}

\section{Nonexistence of some products of sums of powers}
In this section, we apply Harrison's center theory, in particular the uniqueness of the direct sum decomposition of indecomposable summands, to prove the nonexistence of some products of sums of powers.

\begin{theorem} \label{mt1}
Suppose $d>2.$ There is an identity of product of sums of powers
\[ (x_1^d + x_2^d + \cdots + x_r^d) (y_1^d + y_2^d + \cdots + y_n^d) =  z_1^d + z_2^d + \cdots + z_n^d  \]
where each $z_k$ is a linear form in $y$ with coefficients in $\k(x)$ if and only if $r=1.$
\end{theorem}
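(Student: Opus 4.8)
The plan is to prove the nontrivial implication; the converse is immediate, since for $r=1$ one may take $z_k=x_1y_k$. So suppose \eqref{hhr} holds, put $K=\k(x)$, and set $c:=x_1^d+x_2^d+\cdots+x_r^d\in\k[x]\setminus\{0\}$. Viewing \eqref{hhr} as an identity of degree-$d$ forms in the variables $y=(y_1,\dots,y_n)$ over the field $K$, it exhibits one form $f\in K[y_1,\dots,y_n]$ in two ways, namely $f=c\,y_1^d+\cdots+c\,y_n^d$ and $f=z_1^d+\cdots+z_n^d$, where $z_k=\sum_j a_{kj}y_j$ with $A=(a_{kj})\in K^{n\times n}$.

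First I would observe that $f$ is nondegenerate over $K$: its partials $\partial f/\partial y_i=d\,c\,y_i^{d-1}$ are $K$-linearly independent, where we use that $d\neq 0$ in $\k$ because $\operatorname{char}\k$ is $0$ or $>d$. Hence the linear forms $z_1,\dots,z_n$ must themselves be $K$-linearly independent — otherwise $f=\sum_k z_k^d$ would, after a linear change of variables, involve fewer than $n$ variables, contradicting nondegeneracy — so $A\in\GL_n(K)$. Thus both displays are genuine direct sum decompositions of $f$ into $n$ one-variable, hence indecomposable, nondegenerate forms. Passing to the associated symmetric $d$-linear space $(V,\Theta)$, the left display gives the orthogonal decomposition of $V$ into the coordinate lines of the basis dual to $y$, while the right display gives the decomposition into the coordinate lines of the basis dual to $z$; since each such line is one-dimensional and therefore indecomposable, the uniqueness in Proposition \ref{pc}(4) forces these two families of lines to coincide. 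Therefore the $K$-linear substitution $y\mapsto z$ is monomial: after reindexing we have $z_k=\lambda_k y_k$ with $\lambda_k\in K^\times$, and matching the coefficient of $y_k^d$ on the two sides gives $\lambda_k^d=c$. In particular $c=x_1^d+\cdots+x_r^d$ is a $d$-th power in $\k(x)$.

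It then remains to show that $x_1^d+\cdots+x_r^d$ is a $d$-th power in $\k(x)$ only when $r=1$. Since $\k[x]$ is a UFD, hence integrally closed in $\k(x)$, an equation $x_1^d+\cdots+x_r^d=\lambda^d$ with $\lambda\in\k(x)$ forces $\lambda\in\k[x]$, so $x_1^d+\cdots+x_r^d=G^d$ for some $G\in\k[x]$. Comparing $x_1$-degrees, $G=a x_1+b$ with $a\in\k^\times$ (since $a^d=1$) and $b\in\k[x_2,\dots,x_r]$; the $x_1^{d-1}$-coefficient of $G^d$ equals $d\,a^{d-1}b$, which must vanish, so $b=0$ (again using $d\neq 0$ in $\k$), giving $G^d=x_1^d$. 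As $x_2^d+\cdots+x_r^d\neq 0$ whenever $r\geq 2$, we conclude $r=1$, finishing the proof. The two points I expect to require care are: first, checking that the right-hand side is an honest direct sum decomposition (i.e. $A\in\GL_n(K)$), so that Proposition \ref{pc}(4) applies and pins $y\mapsto z$ down to a monomial substitution rather than something weaker; and second, the last, elementary step that a sum of two or more $d$-th powers of distinct variables is never a $d$-th power in the rational function field, which is precisely where the hypothesis on $\operatorname{char}\k$ is used again.
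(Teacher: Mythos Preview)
Your proof is correct and follows essentially the same route as the paper: view the identity over $K=\k(x)$, use nondegeneracy of $f=c\sum y_i^d$ to force the $z_k$ to be $K$-linearly independent, apply the uniqueness of the direct sum decomposition into indecomposables (Proposition~\ref{pc}(4)) to conclude $z_k=\lambda_k y_k$ after reindexing, and then argue via the UFD property of $\k[x]$ that $c$ being a $d$-th power in $\k(x)$ forces $r=1$. Your write-up is in fact a bit more careful than the paper's in two places---you phrase the matching of summands via the underlying orthogonal line decompositions of $(V,\Theta)$, and you spell out explicitly (via an $x_1$-degree comparison and the vanishing $x_1^{d-1}$-coefficient) why $G^d=\sum_i x_i^d$ forces $r=1$, whereas the paper simply asserts this is ``obvious''---but the underlying argument is the same.
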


\begin{proof}
View both sides of the previous identity as polynomials in $y$ with coefficients in $\k(x).$ First note that if the $z_k$ are linearly dependent as linear forms in $y,$ then the polynomial $z_1^d + z_2^d + \cdots + z_n^d$ is degenerate in $\k(x)[y]$ as it can be presented by fewer variables, namely a maximally linearly independent subset of the $z_k.$ Now assume the $z_k$ are linearly independent. Then $z_1^d + z_2^d + \cdots + z_n^d$ is a direct sum, and obviously each $z_k^d$ is indecomposable. Thus by item (4) of Proposition \ref{pc}, after a reordering we may assume $(x_1^d + x_2^d + \cdots + x_r^d)y_j^d=z_j^d$ for all $1 \le j \le n.$ By assumption $z_k \in \k(x)[y],$ hence $z_k=\frac{p(x)}{q(x)}y_k$ where $p(x), q(x) \in \k[x].$ It follows that $x_1^d + x_2^d + \cdots + x_r^d=\frac{p(x)^d}{q(x)^d}.$ Since $\k[x]$ is a unique factorization domain, this forces $q(x) | p(x).$ Therefore we can assume in the beginning $q(x)=1.$ Now we have $x_1^d + x_2^d + \cdots + x_r^d=p(x)^d.$ This means $p(x)$ is a $\k$-linear form in $x.$ Obviously, this happens if and only if $r=1.$
\end{proof}

\begin{remark}
\emph{It follows easily by the previous theorem that there are no nontrivial identities like \eqref{np} even if the $z_k$ are allowed to be chosen from $\k(x)[y].$ Therefore, we confirm a stronger version of Nathason's conjecture.}
\end{remark}

By the same idea, Theorem \ref{mt1} can be slightly generalized to the following

\begin{theorem} \label{mt2}
Suppose $r>1, \ n>1$ and $(n,d) \neq (2,3).$ Then there is no identity of the following type
\[ (x_1^d + x_2^d + \cdots + x_r^d) (y_1^d + y_2^d + \cdots + y_n^d) =  z_1^d + z_2^d + \cdots + z_{n+1}^d  \]
where each $z_k$ is a linear form in $y$ with coefficients in $\k(x).$
\end{theorem}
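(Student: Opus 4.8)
The plan is to mimic the argument of Theorem~\ref{mt1} as closely as possible, again working in $\k(x)[y]$. Write $F = x_1^d + \cdots + x_r^d \in \k(x)$ for the scalar (with respect to $y$) on the left and suppose an identity $F(y_1^d + \cdots + y_n^d) = z_1^d + \cdots + z_{n+1}^d$ holds with each $z_k$ a linear form in $y$ over $\k(x)$. As before, if the $z_k$ are linearly dependent over $\k(x)$ we may reduce their number, so the right-hand side is a sum of at most $n$ independent $d$-th powers of linear forms in $y$; on the other hand the left-hand side $F(y_1^d + \cdots + y_n^d)$ is, after absorbing $F^{1/d}$ is \emph{not} available, but it is nonetheless a direct sum of $n$ indecomposable pieces $F y_j^d$ (the change of variables $y_j \mapsto y_j$ already exhibits the orthogonal decomposition, and $F y_j^d$ is a nonzero scalar times $y_j^d$, hence indecomposable and one-variable). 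By the uniqueness in Proposition~\ref{pc}(4), after reordering we must have $F y_j^d = z_j^d$ for $1 \le j \le n$ and the remaining $z_k$'s (for $k > n$) equal to $0$ — i.e.\ the dependency reduces the count from $n+1$ to exactly $n$. So WLOG $z_1, \dots, z_n$ are independent and $z_{n+1}$ is a $\k(x)$-linear combination of them, while $F y_j^d = z_j^d$ for each $j \le n$.

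From $F y_j^d = z_j^d$ we get, exactly as in Theorem~\ref{mt1}, that $z_j = c_j y_j$ with $c_j \in \k(x)$ and $c_j^d = F = x_1^d + \cdots + x_r^d$; clearing denominators and using that $\k[x]$ is a UFD forces $F$ to be the $d$-th power of a polynomial, hence $r = 1$ — contradicting $r > 1$. The only way to escape this is if the $z_k$ are in fact \emph{independent}, i.e.\ if the reduction step above does not apply and the right-hand side genuinely has $n+1$ independent summands. So the crux of the theorem is the case where $z_1, \dots, z_{n+1}$ are linearly independent linear forms in $y$ — but there are only $n$ variables $y_1, \dots, y_n$, so $n+1$ linear forms in $n$ variables are \emph{always} dependent. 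Thus this case is vacuous, and we are back in the previous paragraph: contradiction. Hence no such identity exists when $r > 1$.

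Wait — this would prove the result for all $n, d$, but the statement carves out $(n,d) = (2,3)$. So the argument above must be too fast: the subtlety is that after reducing $n+1$ dependent forms to a maximal independent subset, we might land on $m < n$ independent forms, not necessarily $m = n$; then the left side $F(y_1^d+\cdots+y_n^d)$ has $n$ indecomposable summands while the right has only $m < n$, which is already a contradiction by Proposition~\ref{pc}(4) unless $F$ itself is degenerate or the left side is not what it appears. The genuine obstruction, and the reason $(2,3)$ is excluded, is that $z_1^d + \cdots + z_{n+1}^d$ need \emph{not} be a direct sum even after passing to independent forms: a sum of $d$-th powers of $m$ independent linear forms can fail to be a direct sum of $m$ pieces when there are \emph{relations among higher powers} — precisely, when $m$ exceeds the number of variables one genuinely needs, the form can be indecomposable or decompose differently. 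For $n = 2$, $d = 3$ one has the classical identity phenomena for binary cubics, so the center argument does not force the contradiction.

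The real plan, then, is: (i) pass to a maximal $\k(x)$-independent subset $z_{i_1}, \dots, z_{i_m}$ of the $z_k$, so the right-hand side is a nondegenerate form in $m \le n$ variables over $\k(x)$; (ii) observe the left-hand side is a direct sum of exactly $n$ indecomposable one-variable forms $F y_j^d$; (iii) invoke Proposition~\ref{pc}(4) to conclude the right-hand side, as a nondegenerate form, must also decompose into $n$ indecomposable summands, forcing $m = n$ and (after reordering) $F y_j^d = z_{i_j}^d$; (iv) run the UFD argument to get $r = 1$, a contradiction. The one gap is step (iii) combined with the possibility $m < n$: if $m < n$ then a nondegenerate form in $m < n$ variables is being claimed equal to one with $n$ indecomposable direct summands, impossible since the number of variables bounds the number of summands — \emph{unless} the passage from $n+1$ down actually only drops by one to $m = n$, which is the generic situation and the one we must rule the exceptions out of. The hard part will be pinning down exactly when $n+1$ linear forms in $n$ variables can have the property that the sum of their $d$-th powers is still a direct sum of $n$ independent $d$-th powers (via the essentially-unique decomposition), and checking that this fails for all $(n,d)$ with $n > 1$ except $(n,d) = (2,3)$, where the binary-cubic identity $u^3 + v^3 + w^3$ with $u+v+w$-type relations produces a genuine $3$-term sum equal to a $2$-term direct sum — i.e.\ showing the only surviving case is $(2,3)$ is where all the work sits, and it should come down to the observation that $z_1^3 + z_2^3 + z_3^3$ with $z_1 + z_2 + z_3$ a suitable relation can equal $(\text{cubic})\cdot(y_1^3 + y_2^3)$ only in degree $3$.
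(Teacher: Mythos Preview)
Your proposal correctly separates the two cases --- writing $z_{n+1}=\sum_{k=1}^n a_k z_k$ after choosing $z_1,\dots,z_n$ independent, either some $a_k=0$ or all $a_k\ne 0$ --- and you correctly see that the first case reduces to Theorem~\ref{mt1} (if $a_n=0$ then $z_n^d$ is visibly an indecomposable direct summand of the right side, hence equals some $Fy_j^d$, forcing $r=1$). You also correctly diagnose, in your third paragraph, why your first paragraph was too fast. But the second case --- all $a_k\ne 0$ --- is where the content lies, and your proposal ends at ``the hard part will be pinning down exactly when\dots'' without carrying it out. That is the gap.

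Concretely: with every $a_k\ne 0$, the form $g=\sum_{k=1}^{n+1} z_k^d$ is nondegenerate in the $n$ coordinates $z_1,\dots,z_n$, but it is \emph{not} presented as a direct sum, and Proposition~\ref{pc}(4) tells you nothing until you know its indecomposable decomposition. Your first paragraph's claim that ``after reordering $Fy_j^d=z_j^d$'' presupposes that $g$ is still a direct sum of $d$-th powers of linear forms; but the expansion of $(a_1z_1+\cdots+a_nz_n)^d$ introduces cross terms, and whether the result is diagonalizable is exactly what is at issue.

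The paper closes this gap not by a further uniqueness argument but by computing $Z(g)$ directly from the Hessian. In the coordinates $z_1,\dots,z_n$ one has $H_{ii}=d(d-1)(z_i^{d-2}+a_i^2 z_{n+1}^{d-2})$ and $H_{ij}=d(d-1)a_ia_j z_{n+1}^{d-2}$ for $i\ne j$; comparing the $(i,j)$ and $(j,i)$ entries of $HX$ for $X\in Z(g)$ gives a linear relation among $z_i^{d-2}$, $z_j^{d-2}$, $z_{n+1}^{d-2}$. When $(n,d)\ne(2,3)$ these three $(d{-}2)$-nd powers are linearly independent (for $d\ge 4$ they are powers $\ge 2$ of pairwise non-proportional linear forms; for $d=3$ and $n\ge 3$ the linear form $z_{n+1}$ has a nonzero component along some $z_k$ with $k\notin\{i,j\}$), and one reads off $x_{ij}=0$ and $x_{ii}=x_{jj}$ for all $i\ne j$. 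Thus $Z(g)\cong\k(x)$, whereas the left side $F(y_1^d+\cdots+y_n^d)$ is diagonal with center $\k(x)^n$ by Proposition~\ref{pc}(5). Since $n>1$ this is a contradiction.

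So the missing ingredient is an explicit center computation, and the exclusion $(n,d)=(2,3)$ is precisely the case where $z_i^{d-2},z_j^{d-2},z_{n+1}^{d-2}$ are three \emph{linear} forms in two variables, hence always dependent, so the Hessian comparison no longer forces $Z(g)$ to be trivial.
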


\begin{proof}
Assume the contrary and we will seek for a contradiction. Just as in the proof of the previous theorem, we may assume without loss of generality that $z_1, z_2, \dots, z_n$ are linearly independent and $z_{n+1}=\sum_{1 \le k \le n} a_k z_k$ with $a_k \in \k(x).$ If some $a_k$ are zero, say $a_l=\cdots=a_n=0,$ then it is clear that $z_n$ is an indecomposable summand and so $z_n^d=(x_1^d + x_2^d + \cdots + x_r^d) y_j^d$ for some $j.$ For the same reason as in Theorem \ref{mt1}, this is impossible. If $a_1a_2 \cdots a_n \ne 0,$ then we compute the center of $g=z_1^d + z_2^d + \cdots + z_{n+1}^d$ and come up with a contradiction. Indeed, let $H$ be the hessian matrix of $g,$ then $H_{ii}=d(d-1)(z_i^{d-2}+a_i^2z_{n+1}^{d-2})$ for all $1 \le i \le n$ and $H_{ij}=d(d-1)a_ia_jz_{n+1}^{d-2}$ for all $i \ne j$. Suppose $X=(x_{ij})\in Z(g).$ According to the definition of centers, compare the $ij$-entry and $ji$-entry of $HX$ for all $i \ne j,$ then we have
\[ z_{i}^{d-2}x_{ij} + z_{n+1}^{d-2} \sum_{1 \le k \le n}a_ia_kx_{kj} = z_{j}^{d-2}x_{ji} + z_{n+1}^{d-2} \sum_{1 \le k \le n}a_ja_kx_{ki}. \] It is not hard to observe that if $(n,d) \ne (2,3),$ then $z_{i}^{d-2}, z_{j}^{d-2}, z_{n+1}^{d-2}$ are linearly independent. It follows easily that $x_{ij}=0$ and $x_{ii}=x_{jj}$ for all $i \ne j,$ that is, $Z(g) \cong \k(x).$ This is absurd, since the center of $(x_1^d + x_2^d + \cdots + x_r^d) (y_1^d + y_2^d + \cdots + y_n^d)$ is isomorphic to $\k(x)^n$ by item (5) of Proposition \ref{pc}. The proof is completed.
\end{proof}

\begin{remark}
\emph{If $(n,d)=(2,3),$ then $z_1, z_2, z_3$ are obviously linearly dependent. Thus the previous argument does not work in this situation. For the moment, we do not know if one can deal with this special case by the theory of centers.}
\end{remark}

\begin{corollary}
Suppose $d>3.$ There holds an identity
\begin{equation} \label{22n}
(x_1^d + x_2^d ) (y_1^d + y_2^d ) =  z_1^d + z_2^d + \cdots +z_n^d
\end{equation}
where each $z_k$ is a linear form in $y$ with coefficients in $\k(x)$ if and only if $n>3.$
\end{corollary}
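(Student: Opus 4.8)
The plan is to prove the two implications separately: the ``only if'' direction follows from Theorems~\ref{mt1} and~\ref{mt2} together with an elementary observation, while the ``if'' direction comes from an explicit identity for $n=4$.

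For the ``only if'' direction, suppose \eqref{22n} holds; I rule out $n=1,2,3$. View both sides as forms of degree $d$ in $y=(y_1,y_2)$ over $\k(x)$. The left-hand side $(x_1^d+x_2^d)(y_1^d+y_2^d)$ is nondegenerate, since its partial derivatives $d(x_1^d+x_2^d)y_1^{d-1}$ and $d(x_1^d+x_2^d)y_2^{d-1}$ are linearly independent over $\k(x)$. If $n=1$, this is a contradiction: $z_1$ is nonzero (the left-hand side is), and after the invertible linear change of variables carrying $z_1$ to a coordinate, $z_1^d$ involves a single variable and hence is degenerate. If $n=2$, the identity has exactly the shape of Theorem~\ref{mt1} with $r=2$, and that theorem forces $r=1$, a contradiction. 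If $n=3$, Theorem~\ref{mt2} applies with $r=2$ and with $2$ in place of its parameter $n$, so that it concerns precisely identities $(x_1^d+x_2^d)(y_1^d+y_2^d)=z_1^d+z_2^d+z_3^d$; since $d>3$ gives $(2,d)\ne(2,3)$, it shows no such identity exists.

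For the ``if'' direction, the case $n=4$ is the explicit identity
\[
(x_1^d+x_2^d)(y_1^d+y_2^d)=(x_1y_1)^d+(x_1y_2)^d+(x_2y_1)^d+(x_2y_2)^d,
\]
in which each $x_iy_j$ is a linear form in $y$ with coefficient $x_i\in\k(x)$. For $n>4$ one appends $n-4$ vanishing summands $0^d$, the zero form being a (trivial) linear form in $y$.

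The only step that is not completely routine is the treatment of $n>4$ when one insists that all $z_k$ be nonzero. Then, comparing coefficients of the monomials $y_1^{d-j}y_2^j$, the problem reduces to writing $x_1^d+x_2^d$ both as a sum of $p$ and as a sum of $q$ $d$-th powers of elements of $\k(x)$ with $p+q=n$ and $p,q\ge 2$; and when $d$ is even and $\k$ is formally real, the relation $\sum_k\alpha_k^{d-2}\beta_k^2=0$ (where $z_k=\alpha_ky_1+\beta_ky_2$) forces each $z_k$ to be a scalar multiple of $y_1$ or of $y_2$, so this reduction is unavoidable and one must genuinely exhibit such Waring-type representations --- for example a three-term representation of $x_1^d+x_2^d$ for the case $n=5$, and a representation of $1$ as a sum of three $d$-th powers in $\k(x)$ to pass from $m$ summands to $m+2$. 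I expect producing these uniformly over all admissible $\k$ to be the main obstacle; everything else is bookkeeping.
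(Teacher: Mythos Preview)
Your proof is correct and follows essentially the same route as the paper: the paper invokes Theorem~\ref{mt2} with $r=n=2$ for the ``only if'' direction (leaving the cases $n\le 2$ implicit, where you spell them out via nondegeneracy and Theorem~\ref{mt1}), and for the ``if'' direction uses the same four-term identity $z_k=x_iy_j$ padded with zeros. Your final paragraph is unnecessary: the corollary does not require the $z_k$ to be nonzero, so zero-padding already handles all $n>4$ and there is no Waring-type obstacle to address.
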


\begin{proof}
Consider the special case $r=n=2$ of Theorem \ref{mt2}. It follows that the identity \eqref{22n} holds only if $n>3.$ Conversely, if $n>3,$ let $z_1=x_1y_1, \ z_2=x_1y_2, \ z_3=x_2y_1, \ z_4=x_2y_2$ and the other $z_k=0,$ then obviously \ref{22n} holds.
\end{proof}

\begin{remark}
\emph{This corollary confirms a special case of Becker's conjecture, namely the case of $r=s=2$ and $d>3$ with $z_k$ assumed to be polynomials in $y.$}

\end{remark}

\end{document}